\newtheorem{theorem}{Theorem}
\newtheorem{definition}[theorem]{Definition}
\newcommand*{\cl}{\mathsf{CL}}
\newcommand*{\il}{\mathsf{IL}}
\newcommand*{\nf}{\mathsf{NF}}
\newcommand*{\kolmogorovtext}{K}
\newcommand*{\kolmogorov}[1]{#1^\kolmogorovtext}
\newcommand*{\ntmtext}{M}
\newcommand*{\ntm}[1]{#1^\ntmtext}
\newcommand*{\ntntext}{N}
\newcommand*{\ntn}[1]{#1^\ntntext}
\newcommand*{\im}{\operatorname{im}}
\newcommand*{\f}{F}
\begin{document}

\title{Copies of classical logic in intuitionistic logic}
\author{Jaime Gaspar\thanks{INRIA Paris-Rocquencourt, $\pi r^2$, Univ Paris Diderot, Sorbonne Paris Cit\'e, F-78153 Le Chesnay, France. \texttt{mail@jaimegaspar.com}, \texttt{www.jaimegaspar.com}. Financially supported by the French Fondation Sciences Math\'ematiques de Paris. This article is essentially a written version of a talk given at the 14th~Congress of Logic, Methodology and Philosophy of Science (Nancy, France, 19\protect\nobreakdash--26 July 2011), reporting on results in a PhD thesis~\cite[chapter~14]{Gaspar2011} and in an article~\cite{Gaspar2012}.}}
\date{8 November 2012}
\maketitle

\begin{abstract}
  Classical logic (the logic of non-constructive mathematics) is stronger than intuitionistic logic (the logic of constructive mathematics). Despite this, there are copies of classical logic in intuitionistic logic. All copies usually found in the literature are the same. This raises the question: is the copy unique? We answer negatively by presenting three different copies.
\end{abstract}

\section{Philosophy}

\subsection{Non-constructive and constructive proofs}

Mathematicians commonly use an indirect method of proof called non-constructive proof: they prove the existence of an object without presenting (constructing) the object. However, may times they can also use a direct method of proof called constructive proof: to prove the existence of an object by presenting (constructing) the object.

\begin{definition}
  \mbox{}
  \begin{itemize}
    \item A \emph{non-constructive proof} is a proof that proves the existence of an object without presenting the object.
    \item A \emph{constructive proof} is a proof that proves the existence of an object by presenting the object.
  \end{itemize}
\end{definition}

From a logical point of view, a non-constructive proof uses the law of excluded middle while a constructive proof does not use the law of excluded middle.

\begin{definition}
  The \emph{law of excluded middle} is the assertion ``every statement is true or false''.
\end{definition}

To illustrate this discussion, let us see the usual example of a theorem with non-constructive and constructive proofs.

\begin{theorem}
  There are irrational numbers $x$ and $y$ such that $x^y$ is a rational number.
\end{theorem}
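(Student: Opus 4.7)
The plan is to give both styles of proof side by side, since the theorem is being used as a pedagogical foil for the non-constructive/constructive distinction introduced a moment earlier.

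For the non-constructive proof I would consider the number $\sqrt{2}^{\sqrt{2}}$ and invoke the law of excluded middle: either $\sqrt{2}^{\sqrt{2}}$ is rational or it is not. In the first case take $x=y=\sqrt{2}$; in the second case take $x=\sqrt{2}^{\sqrt{2}}$ and $y=\sqrt{2}$, so that $x^y=(\sqrt{2}^{\sqrt{2}})^{\sqrt{2}}=\sqrt{2}^{\,2}=2$ is rational. In both cases $x$ and $y$ are irrational, which is all one needs. The only input required beyond the case split is the well-known irrationality of $\sqrt{2}$, and the punchline is precisely that the proof does not say which of the two pairs actually witnesses the theorem.

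For the constructive proof I would exhibit a specific pair. The natural choice is $x=\sqrt{2}$ and $y=\log_2 9$, giving
\[
  x^y = (\sqrt{2})^{\log_2 9} = 2^{(\log_2 9)/2} = 2^{\log_2 3} = 3,
\]
which is rational. It then remains to verify that $x$ and $y$ are both irrational. Irrationality of $\sqrt{2}$ is standard. For $\log_2 9$, I would argue by contradiction: if $\log_2 9 = p/q$ with positive integers $p,q$, then $2^p = 9^q$, contradicting unique prime factorisation since the left-hand side has only the prime $2$ and the right-hand side has only the prime $3$.

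There is no real obstacle here; the computation $(\sqrt{2})^{\log_2 9}=3$ and the two irrationality arguments are all elementary. The only point that needs care is to make sure the presentation clearly contrasts the two proofs so that the reader sees exactly where the law of excluded middle is used in the first proof and exactly why it is avoided in the second, since this contrast is the whole purpose of including the example.
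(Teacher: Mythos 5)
Your non-constructive proof is exactly the one in the paper. Your constructive proof, however, takes a genuinely different route: the paper keeps the pair $x=\sqrt 2^{\sqrt 2}$, $y=\sqrt 2$ from the second case of the non-constructive argument and invokes the Gelfond--Schneider theorem to certify that $\sqrt 2^{\sqrt 2}$ is irrational, whereas you switch to $x=\sqrt 2$, $y=\log_2 9$ with $x^y=3$, for which both irrationality claims are elementary (infinite descent for $\sqrt 2$, unique factorisation via $2^p=9^q$ for $\log_2 9$). Your computation and both irrationality arguments are correct. The trade-off is this: your version is self-contained and needs no deep input, which is a real advantage if the audience should be able to check every step; the paper's version buys a sharper pedagogical contrast, since the constructive proof exhibits the \emph{same} witness that the non-constructive proof could not decide upon, making vivid that the only thing missing from the non-constructive argument was the (hard-won) knowledge of which case actually holds. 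Either choice serves the expository purpose you correctly identify; just be aware that with your witness the two proofs no longer speak about the same candidate pair, so the ``the constructive proof tells you which case was true'' moral is lost.
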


\begin{proof}[Non-constructive proof]
  By the law of excluded middle, $\sqrt 2^{\sqrt 2}$ is a rational number or an irrational number.
  \begin{description}
    \item[Case $\sqrt 2^{\sqrt 2}$ is a rational number] Let $x = \sqrt 2$ and $y = \sqrt 2$. Then $x$ and $y$ are irrational numbers such that $x^y = \sqrt 2^{\sqrt 2}$ is a rational number.
    \item[Case $\sqrt 2^{\sqrt 2}$ is an irrational number] Let $x = \sqrt 2^{\sqrt 2}$ and $y = \sqrt 2$. Then $x$ and $y$ are irrational numbers such that $x^y = 2$ is a rational number.\qedhere
  \end{description}
\end{proof}

Note that the above proof is non-constructive because the proof does not present $x$ and $y$ since the proof does not decide which case holds true. Also note that the proof uses the law of excluded middle.

\begin{proof}[Constructive proof]
  Let $x = \sqrt 2^{\sqrt 2}$ and $y = \sqrt 2$. Then $x$ (by the Gelfond-Schneider theorem) and $y$ are irrational numbers such that $x^y = 2$ is a rational number.
\end{proof}

Note that the above proof is constructive because the proof presents $x$ and $y$. Also note that the proof does not use the law of excluded middle.

\subsection{Constructivism}

We saw that mathematicians use both non-constructive and constructive proofs. There is a school of thought in philosophy of mathematics, called constructivism, which rejects non-constructive proofs in favour of constructive proofs.

\begin{definition}
  \emph{Constructivism} is the philosophy of mathematics that insists on constructive proofs.
\end{definition}

Let us see some motivations for constructivism.
\begin{description}
  \item[Philosophical motivations]\mbox{}
  \begin{itemize}
    \item The more radical constructivists simply consider non-constructive proofs unsound. The less radical constructivists consider that non-constructive proofs may be sound, but not as sound as constructive proofs.
    \item Some constructivists reject the mind-independent nature of mathematical objects. So for a mathematician to prove the existence of an object, he/she has to give existence to the object by constructing the object in his/her mind.
    \item Non-constructivism puts the emphasis on truth (as in ``every statement is true or false''), while constructivism puts the emphasis on justification (as in ``we have a justification to believe that a statement is true, or we have a justification to believe that the statement is false''). Given an arbitrary statement, in general there is no justification to believe that the statement is true and no justification to believe that the statement is false, so a constructivist would not assert ``every statement is true or false'', that is a constructivist rejects the law of excluded middle.
    \item Non-constructivism does not differentiate between the quantifications $\neg \forall x \neg$ and $\exists x$, but constructivism is more refined because it differentiates between them:
    \begin{itemize}
      \item $\neg\forall x\neg$ means the usual ``there exists an $x$''; 
      \item $\exists x$ has the stronger meaning of ``there exists an $x$ and we know $x$''.
    \end{itemize}
  \end{itemize}
  \item[Mathematical motivations]\mbox{}
  \begin{itemize}
    \item Constructive proofs are more informative than non-constructive proofs because they not only prove the existence of an object, but even give us an example of such an object.
    \item We can use the constructive setting to study non-constructive principles. In the usual setting of mathematics, which includes non-constructive principles, there is no way to tell the difference between what results from the setting and what results from the non-constructive principles. But in a constructive setting we can isolate the role of non-constructive principles. For example, if we want to determine which theorems are implied by the axiom of choice, we need to do it in set theory without the axiom of choice.
   \item There are several tools in mathematical logic that work fine for constructive proofs but not for non-constructive proofs. So in order to benefit from these tools we should move to a constructive setting. For example, the extraction of computational content using G\"odel's functional interpretation can always be done for constructive proofs but has restrictions for non-constructive proofs.
  \end{itemize}
  \item[Historical motivation]\mbox{}
  \begin{itemize}
    \item Until the 19th century all proofs in mathematics were more or less constructive. Then in the second half of the 19th century there were introduced powerful, infinitary, abstracts, non-constructive principles. These principles were already polemic at the time. Even worse, at the turn of the century there were discovered paradoxes related to these non-constructive principles. Then it was not only a question of what principles are acceptable, but even the consistency of mathematics was at stake. Constructivism proposes a solution to this crisis: to restrict ourselves to the safer constructive principles, which are less likely to produce paradoxes.
  \end{itemize}
\end{description}

\section{Mathematics}

\subsection{Classical and intuitionistic logics}

We saw that non-constructivism uses the law of excluded middle while constructivism does not use the law of excluded middle. Let us now formulate this idea in terms of logic.

\begin{definition}
  \mbox{}
  \begin{itemize}
    \item \emph{Classical logic}~$\cl$ is (informally) the usual logic of mathematics including the law of excluded middle.
    \item \emph{Intuitionistic logic}~$\il$ is (informally) the usual logic of mathematics excluding the law of excluded middle.
  \end{itemize}
\end{definition}

To be sure, $\cl$ corresponds to non-constructivism, and $\il$ corresponds to constructivism.

Now let us compare $\cl$ and $\il$. We can prove the following.
\begin{itemize}
  \item $\cl$ is strictly stronger than $\il$ (that is there are theorems of $\cl$ that are not theorems of $\il$, but every theorem of $\il$ is a theorem of $\cl$).
  \item $\cl$ is non-constructive (that is there are proofs in $\cl$ that cannot be turned into constructive proofs) while $\il$ is constructive (that is every proof in $\il$ can be turned into a constructive proof).
\end{itemize}

\subsection{Copies}

To introduce the notion of a copy of classical logic in intuitionistic logic, first we need to introduce the notion of a negative translation.

\begin{definition}
  A \emph{negative translation} is a mapping $\ntntext$ of formulas that embeds $\cl$ in $\il$ in the sense of satisfying the following two conditions.
  \begin{description}
    \item[Respecting provability] For all formulas $A$ and sets $\Gamma$ of formulas we have the implication $\cl + \Gamma \vdash A \ \Rightarrow \ \il + \ntn\Gamma \vdash \ntn A$ (where $\ntn\Gamma = \{\ntn A : A \in \Gamma\}$);
    \item[Faithfulness] For all formulas $A$ we have $\cl \vdash A \leftrightarrow \ntn A$.
  \end{description}
  A \emph{copy} of classical logic in intuitionistic logic is the image $\im \ntntext$ (the set of all formulas of the form $\ntn A$) of a negative translation $\ntntext$~\cite[paragraph~14.5]{Gaspar2011} \cite[definition~1]{Gaspar2012}.
\end{definition}

Let us explain why it is fair to say that an image is a copy of classical logic in intuitionistic logic. From the definition of a negative translation we get the following equivalence:
\begin{equation*}
  \cl \vdash A \ \Leftrightarrow \ \il \vdash \ntn A.
\end{equation*}
We can read this equivalence in the following way: the formulas $\ntn A$ in $\im \ntntext$ are mirroring in $\il$ the behaviour of $\cl$. So $\im \ntntext$ is a reflection, a copy, of classical logic in intuitionistic logic.

\subsection{Question: is the copy unique?}

There are four negative translations usually found in the literature; they are due to Kolmogorov, G\"odel-Gentzen, Kuroda and Krivine. The simplest one to describe is Kolmogorov's negative translation: it simply double negates every subformula of a given formula.

All the usual negative translations give the same copy: the negative fragment.

\begin{definition}
  The \emph{negative fragment}~$\nf$ is (essentially) the set of formulas without $\vee$ and $\exists$.
\end{definition}

The fact that all the usual negative translations give the same copy leads us to ask: \emph{is the copy unique?}

Here we should mention that when we say that two copies are equal, we do not mean ``syntactically/literally equal'' (that would be too strong and easily falsified); we mean ``equal modulo $\il$'' (that is ``modulo identifying formulas that are provably equivalent in $\il$'').

\subsection{Answer: no}

In the following theorem we show that the answer to our question is \emph{no} by presenting three different copies.

\begin{theorem}
  Let us fix a formula~$\f$ such that $\cl \vdash \neg\f$ but $\il \nvdash \neg\f$ (there are such formulas~$\f$). Then
  \begin{itemize}
    \item $\nf$
    \item $\nf \vee \f = \{A \vee \f : A \in \nf\}$
    \item $\nf[\f/\bot] = \{A[\f/\bot] : A \in \nf\}$
  \end{itemize}
  are pairwise different copies~\cite[paragraph~14.10]{Gaspar2011} \cite[lemma~7, theorem~8 and proposition~9]{Gaspar2012}.
\end{theorem}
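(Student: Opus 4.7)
The plan has two stages: first show that each of the three sets arises as the image (modulo $\il$) of some negative translation, and then show the three copies are pairwise distinct modulo $\il$.

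For stage one, $\nf$ is the image of Kolmogorov's translation $A \mapsto A^K$. For $\nf \vee \f$, I would define $\ntn A := A^K \vee \f$: faithfulness $\cl \vdash A \leftrightarrow \ntn A$ is immediate from $\cl \vdash A \leftrightarrow A^K$ together with $\cl \vdash \neg \f$; respect for provability follows because $\cl + \Gamma \vdash A$ gives $\il + \Gamma^K \vdash A^K$ by Kolmogorov, and then from the hypotheses $\{B^K \vee \f : B \in \Gamma\}$ one derives $A^K \vee \f$ by successively case-splitting each disjunction (if every branch picks $B^K$ one obtains $A^K$, otherwise some branch contributes $\f$ and the conclusion is immediate). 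For $\nf[\f/\bot]$, I would define $\ntn A := A^K[\f/\bot]$: faithfulness follows from $\cl \vdash \bot \leftrightarrow \f$ via substitution, but respect for provability is the main obstacle. The naive idea of substituting $\f$ for $\bot$ inside an $\il$-proof of $A^K$ from $\Gamma^K$ fails because the rule of ex falso $\bot \to X$ does not transfer to $\f$; the fix is to replay Kolmogorov's soundness argument with $\neg_\f X := X \to \f$ in place of $\neg X$, verifying the $\neg_\f$-version of triple-negation elimination $\il \vdash \neg_\f \neg_\f \neg_\f X \to \neg_\f X$, the stability $\il \vdash A^K[\f/\bot] \leftrightarrow \neg_\f \neg_\f A^K[\f/\bot]$, and the $\il$-provability of the translates of each classical axiom schema.

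For stage two, note that $\f$ is $\il$-equivalent to $\bot \vee \f \in \nf \vee \f$ and literally equals $\bot[\f/\bot] \in \nf[\f/\bot]$, yet $\f \not\equiv_\il B$ for any $B \in \nf$: if $\f \equiv_\il B$ with $B \in \nf$, then $\cl \vdash \neg \f$ would give $\cl \vdash \neg B$, and since $\neg B$ is again in the copy $\nf$, faithfulness forces $\il \vdash \neg B$ and hence $\il \vdash \neg \f$, contradicting $\il \nvdash \neg \f$. This separates $\nf$ from the other two copies. To separate $\nf \vee \f$ from $\nf[\f/\bot]$, I would fix an atomic formula $P$ not occurring in $\f$ and consider $P \wedge \f = (P \wedge \bot)[\f/\bot] \in \nf[\f/\bot]$. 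If $P \wedge \f \equiv_\il C \vee \f$ for some $C \in \nf$, then classically $\cl \vdash \neg C$, so faithfulness of $\nf$ gives $\il \vdash \neg C$, hence $\il \vdash (C \vee \f) \leftrightarrow \f$, and combining with the assumed equivalence yields $\il \vdash \f \to P$. Substituting $\bot$ for $P$ (valid because $P$ does not occur in $\f$ and substitution for an atomic letter preserves $\il$-derivability) gives $\il \vdash \neg \f$, the desired contradiction.
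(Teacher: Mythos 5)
Your proposal is correct, and its first stage coincides with the paper's: you use the same three translations $\kolmogorovtext$, $A \mapsto \kolmogorov A \vee \f$ and $A \mapsto \kolmogorov A[\f/\bot]$, with the same key points (finitely many case splits on the hypothesis disjunctions for the second, and a Friedman-style replay of the Kolmogorov soundness argument with the relativized negation $\neg_{\f}X := X \to \f$ for the third, which correctly addresses the only real obstacle, namely ex falso). Where you genuinely diverge is the distinctness stage. The paper first proves a general lemma --- two negative translations have the same image modulo $\il$ if and only if they are pointwise equivalent modulo $\il$ --- and then separates the three translations pointwise at $\bot$ and at a fresh nullary predicate letter $P$. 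You instead separate the copies directly, exhibiting $\f$ itself (as $\bot \vee \f$, respectively $\bot[\f/\bot]$) and $P \wedge \f$ as witnesses, and using the observation that a formula lying in a copy modulo $\il$ which is a classical theorem must already be an intuitionistic theorem (combine faithfulness with respect for provability). This is a clean shortcut: it avoids the nontrivial direction of the image-versus-pointwise lemma, at the cost of generality (that lemma characterizes exactly when two copies coincide, whereas your argument only refutes equality in these three instances); note that your witnesses are in substance the paper's, transported from the translations to the copies. Two minor points: what you call ``faithfulness of $\nf$'' is really the conservativity of $\cl$ over $\il$ on $\nf$, which you should derive explicitly from $\im\kolmogorovtext = \nf$ together with faithfulness and respect for provability of $\kolmogorovtext$; and the paper's proof also discharges the parenthetical existence claim by verifying that $\f = \neg(\forall x \neg\neg P(x) \to \forall x P(x))$ satisfies $\cl \vdash \neg\f$ and $\il \nvdash \neg\f$, a step your proposal leaves unaddressed.
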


\begin{proof}[Sketch of the proof]
  We have to show the following three things.
  \begin{description}
    \item[There is an $\f$ such that $\cl \vdash \neg\f$ but $\il \nvdash \neg\f$] We can prove that $\f = \neg(\forall x \neg\neg P(x) \to \forall x P(x))$ (where $P(x)$ is a unary predicate symbol) is in the desired conditions~\cite[paragraph~14.11.6]{Gaspar2011} \cite[proof of lemma~7.1]{Gaspar2012}.
    \item[$\nf$, $\nf \vee \f$ and ${\nf[\f/\bot]}$ are copies] Let $\kolmogorovtext$ be Kolmogorov's negative translation, $\ntm A = \kolmogorov A \vee \f$ and $\ntn A = \kolmogorov A[\f/\bot]$~\cite[paragraph~14.8]{Gaspar2011} \cite[definition~6]{Gaspar2012}. We can prove that $\kolmogorovtext$, $\ntmtext$ and $\ntntext$ are negative translations (here we use the hypothesis $\cl \vdash \neg\f$) such that $\im\kolmogorovtext = \nf$, $\im\ntmtext = \nf \vee \f$ and $\im\ntntext = \nf[\f/\bot]$~\cite[paragraph~14.10]{Gaspar2011} \cite[theorem~8]{Gaspar2012}. This is pictured in figure~\ref{figure:negativetranslations}.
    \item[$\nf$, $\nf \vee \f$ and ${\nf[\f/\bot]}$ are different] We can prove that the images of two negative translations are equal if and only if the negative translations are pointwise equal (modulo~$\il$)~\cite[paragraph~14.11.4]{Gaspar2011}. And we can prove that $\kolmogorovtext$, $\ntmtext$ and $\ntntext$ are not pointwise equal by proving $\il \nvdash \ntm\bot \to \kolmogorov\bot$, $\il \nvdash \ntn\bot \to \kolmogorov\bot$ and $\il \nvdash \ntn P \to \ntm P$ (where $P$ is a nullary predicate symbol different from $\bot$) (here we use the hypothesis $\il \nvdash \neg\f$)~\cite[paragraph~14.11]{Gaspar2011} \cite[proofs of theorem~8.3 and proposition~9]{Gaspar2012}.\qedhere
  \end{description}
  \begin{figure}[htbp]
    \centering
    \begin{pspicture}(14cm,4cm)
      \psset{framearc=0.5,arrowsize=5pt}

      \psframe(0,0)(6,4) \rput(3,2){$\cl$}
      \psframe(8,0)(14,4) \rput(13.3,3.6){$\cl$}
      \psframe(8.5,0.5)(13.5,3.5) \rput(13,3){$\il$}

      \psellipse(11,2.5)(1,0.85) \rput(11,2.5){$\nf$}
      \psellipse(10.1,1.5)(1,0.85) \rput[t](10,1.6){$\nf \vee \f\vphantom{[}$}
      \psellipse(11.9,1.5)(1,0.85) \rput[t](12,1.6){$\nf[\f/\bot]$}

      \pscurve{->}(3,2.5)(7,2.85)(11,2.8) \rput(7,3.15){$\kolmogorovtext$}
      \pscurve{->}(3,1.5)(7,1.1)(10.1,1.15) \rput(7,1.4){$\ntmtext$}
      \pscurve{->}(3,1.2)(7,0.85)(11.9,1) \rput(7,0.55){$\ntntext$}
    \end{pspicture}
    \caption{the negative translations $\kolmogorovtext$, $\ntmtext$ and $\ntntext$, and the copies $\nf$, $\nf \vee \f$ and $\nf[\f/\bot]$.}
    \label{figure:negativetranslations}
  \end{figure}
\end{proof}

\bibliography{References}{}
\bibliographystyle{plain}

\end{document}